\documentclass[11pt]{amsart}

\usepackage{amsmath, amsfonts, amssymb, mathrsfs, euscript, amsthm}
\usepackage{algorithm}
\usepackage{algpseudocode}
\usepackage[usenames,dvipsnames,svgnames,table]{xcolor}

\usepackage{url}
\usepackage{hyperref}
\hypersetup{urlcolor=cyan,citecolor=blue,colorlinks=true}

\usepackage{setspace}  
\usepackage{breqn}
\usepackage[USenglish]{babel}
\usepackage{hyphenat}
\usepackage{multirow}

\onehalfspacing

\newtheorem*{observation}{Observation}

\newtheorem{theorem}{Theorem}
\newtheorem{lemma}[theorem]{Lemma}
\newtheorem{corollary}[theorem]{Corollary}

\newcommand{\dlg}{\mathrm{dlg}}



\title[Semi-Primitive Roots and the Discrete Logarithm Modulo $2^{k}$]{Semi-Primitive Roots and the Discrete Logarithm Modulo $2^{k}$}

\author[]{Bianca Sosnovski}
\address{Department of Mathematics and Computer Science, Queensborough Community College, City University of New York, USA}
\email{bsosnovski@qcc.cuny.edu}
\keywords{multiplicative groups of integers, semi-primitive roots, discrete logarithm,  residue arithmetic, algorithms, residue number system}

\begin{document}

\maketitle
 
 \begin{abstract}
We establish a connection between semi-primitive roots of the multiplicative group of integers modulo $2^{k}$ where $k\geq 3$, and the logarithmic base in the algorithm introduced by Fit-Florea and Matula (2004) for computing the discrete logarithm modulo $2^{k}$. Fit-Florea and Matula used properties of the semi-primitive root 3 modulo $2^{k}$ to obtain their results and provided a conversion formula for other possible bases. We show that their results can be extended to any semi-primitive root modulo $2^{k}$ and also present a generalized version of their algorithm to find the discrete logarithm modulo $2^{k}$. Various applications in cryptography, symbolic computation, and others can potentially benefit from higher precision hardware integer arithmetic. The algorithm is suitable for hardware support of applications where fast arithmetic computation is desirable.

\end{abstract}

\section{Introduction}
\label{sec:introduction}

Fit-Florea and Matula \cite{fit} presented a digit serial algorithm for computing the discrete logarithm of a residue modulo $2^k$ that uses 3 as the logarithmic base. The algorithm is suitable for hardware support of applications where fast arithmetic computation is desirable. Their interest was particularly in algorithms for hardware implementations where $k$ = 64, 128, 256, 512, or 1024.

Various applications in cryptography, symbolic computation, combinatorial problems in geometry, and others can potentially benefit from higher precision hardware integer arithmetic. So enhancing hardware capabilities for modular integer arithmetic operations at high-precision arguments is desirable. 

An approach to enhance modular integer arithmetic is through parallelization of operations. For example, the residue number system (RNS) provides the ability to perform fast and parallel computations. It can be used to support high-speed arithmetic by operating in parallel channels without the need for exchanging information among the channels. RNS is an integer representation that consists of a base of co-prime moduli $(m_1, m_2, \ldots, m_n)$ used to split an integer $X$ into smaller integers $(x_1, x_2, \ldots, x_n)$ where $x_i$ is the residue $X \equiv x_i \bmod m_i$.

In RNS, addition, subtraction, and multiplication operations are efficient and can be done without carry propagation between residue digits. Furthermore, the residue digits are usually much smaller than the binary representation of $X$. The RNS representation allows building arithmetic units for large numbers as a set of small and fast circuits. However, other arithmetic operations such as number comparison, division, and modular reduction are complex in RNS \cite{tomczak,ss16}.

RNS is widely used in signal processing applications. In future signal processing, many processing tasks may be performed in the optical domain. Encryption is an essential aspect of information security. Due to its inherent high speed and parallelism, all-optical encryption is a promising approach to improve network security. Bakhtiar and Hosseinzadeh \cite{bh16} demonstrated the concept of optical RNS and presented a scheme for all-optical encryption/decryption using optical RNS arithmetic operations. More RNS potential and applications can be found in \cite{mohan}.

Since for cryptosystems, computations with large integers (or $\mathbb{F}_p$ elements) are required, hardware support is needed, and RNS properties can be exploited to speed up cryptographic computations. For example, RNS has been used to speed up computations with large operands for RSA in \cite{bi04,nm01,ss14}, for elliptic curve cryptography in \cite{lp09,abs12,bde13}, and for lattice base- cryptography in \cite{bemp14,schi20}.

Computations over the moduli in RNS, or channels, are independent of each other. Thus, hardware support for computations in RNS where one of the moduli is $2^k$ can potentially be helpful. For example, Tomczak \cite{tomczak} proposed the Hierarchical RNS (HRNS) with two levels where the numbers are represented as a set of residues modulo factors of $2^k\pm 1$ and  $2^k$, and the converters between the HRNS and the binary representation use RNS with moduli $(2^k-1, 2^k, 2^k+1)$.

Let  $x=x_{n-1}x_{n-2}\ldots x_{2} x_{1}x_{0}$ be a $n$-bit integer. Following \cite{szabo}, the modular notation $\lvert x \rvert _{2^k} = x_{k-1}x_{k-2}\ldots x_{2} x_{1}x_{0}$ is used to
denote the congruence notation  $x_{n-1}x_{n-2}\ldots x_{2} x_{1}x_{0} \equiv x_{k-1}x_{k-2}\ldots x_{2} x_{1}x_{0} \mod 2^k$, that is, the value of the standard low-order $k$-bit string for all $1 \leq k \leq n$.  The discrete logarithm can be  used in the representation of $x$ by a triple $(s,p,e)$ such that $x= \lvert (-1)^{s}2^{p}h^{e}\rvert_{2^{k}}$ with $s=\{0,1\}$, $0\leq e \leq 2^{k-2}-1$, $0\leq p < 2^{k}$, and the minimal exponent $e$ is defined as the discrete logarithm of $x$ modulo $2^k$. We denote $ \dlg_{(h,k)}(x)$ the discrete logarithm $e$ with respect to base $h$.

This type of discrete logarithm factorization of integers requires converting integers to the exponential triple. Benschop in \cite{benschop} employed such alternative representation for integers to reduce modular multiplications to modular additions and modular exponentiations to modular multiplications. The discrete logarithm factorization can be uniquely determined by factoring out the largest even power $2^p$ and then applying the discrete logarithm algorithm to the odd part  $\lvert (-1)^{s}h^{e}\rvert_{2^{k}}$.  

Fit-Florea and Matula's algorithm computes the discrete logarithm $e$ of $x$ for the special case of modulo $2^k$. It employs $O(k)$ dependent multiplications modulo $2^{k}$ due to the serial nature of the algorithm. 

The proposed algorithm has some similarities to the index calculus method. Still, the main difference is that it is a fully deterministic algorithm, and storing values computed beforehand is not needed \cite{fit}. Other algorithms for discrete logarithm computation exist, but they are more complex and consider an arbitrary modulo. Algorithms such as Pollard's $\rho$, Pohlig-Hellman, the index calculus method, and Shanks baby-step giant-step are super polynomial algorithms, and only Shanks baby-step giant-step is deterministic \cite{odl2000}. In general, these algorithms are not efficient for actual applications where there is the assumption that the discrete logarithm is hard to compute. The generalized version of the Fit-Florea and Matula's algorithm, which finds the discrete logarithm for the particular case modulo $2^k$, can potentially be used in hardware support of higher precision modular arithmetic.

The rest of this paper is organized as follows: Section \ref{spr} presents the definition of semi-primitive roots and the foundation for why it is possible to generalize the bit-serial discrete logarithm for bases other than 3 as in the case of the original version of the algorithm. In \cite{fit}, the authors noted that other bases could be considered, and a change of base formula is provided. But the results presented in this paper go further by offering a general version of the algorithm that uses any base, which is a semi-primitive root modulo $2^k$. In Sections \ref{inheritance} and \ref{props}, the fundamental properties for the serial determination of the discrete logarithms modulo $2^k$ are presented.   In  Section \ref{algorithm}, we present the results used to distinguish positive powers from negative powers of semi-primitive root $h$  and the generalized Fit-Florea and Matula's algorithm.

\section{Semi-primitive roots modulo $2^{k}$}
\label{spr}

Let $\mathbb{Z}_{n}^{*}$ be the multiplicative group of integers modulo $n$. Lee, Kwon and Shin  \cite{lee1,lee2} define an integer $h$ as a \emph{semi-primitive root} modulo $n$ if the order of $h$ in  $\mathbb{Z}_{n}^{*}$ is equal to $\phi(n)/2$ \footnote{$\phi(n)$ is the Euler's totient function}.

\begin{theorem} [Lee, Kwon and Shin, 2011]
Suppose that  $\mathbb{Z}_{n}^{*} \cong C_{2} \times C_{\phi(n)/2}$. Then there exist a semi-primitive root $h\in \mathbb{Z}_{n}^{*}$ such that 
\[\mathbb{Z}_{n}^{*}=\left\{\pm h^{i} \bmod n\ : \ i=0,\ldots, \frac{\phi(n)}{2}-1\right\}.\]
\end{theorem}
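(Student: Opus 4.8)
The plan is to recast the claim group-theoretically. Write $G=\mathbb{Z}_{n}^{*}$ and $m=\phi(n)/2$; note $\phi(n)$ must be even for the hypothesis to make sense, so $-1$ has order exactly $2$ in $G$. It suffices to produce $h\in G$ of order $m$ with $-1\notin\langle h\rangle$, because then $h^{0},\dots,h^{m-1}$ fill out $\langle h\rangle$ and $-h^{0},\dots,-h^{m-1}$ fill out the coset $-\langle h\rangle$; these are disjoint exactly when $-1\notin\langle h\rangle$, in which case the $2m=\phi(n)=|G|$ elements $\pm h^{i}$ are distinct and hence exhaust $G$. Equivalently, $-1\notin\langle h\rangle$ gives $\langle -1\rangle\cap\langle h\rangle=\{1\}$, so $G=\langle -1\rangle\times\langle h\rangle$, and $h$, having order $m=\phi(n)/2$, is a semi-primitive root by definition.

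Next I would handle the generic cases via the hypothesis $G\cong C_{2}\times C_{m}$. If $m$ is odd, $C_{2}\times C_{m}\cong C_{2m}$ is cyclic, so any $h$ of order $m$ generates a subgroup of odd order, which cannot contain the unique involution $-1$. If $m$ is even, fix an abstract splitting $G=\langle a\rangle\times\langle b\rangle$ with $|a|=2$, $|b|=m$; the involutions of $G$ are then $a$, $b^{m/2}$, and $ab^{m/2}$. If $-1\in\{a,\,ab^{m/2}\}$, take $h=b$: the cyclic group $\langle b\rangle$ has order $m$ and contains neither $a$ nor $ab^{m/2}$. If $-1=b^{m/2}$ with $m\equiv 2\pmod 4$, take $h=ab$: then $h$ has order $\mathrm{lcm}(2,m)=m$ and $b^{m/2}\notin\langle ab\rangle$, since $b^{m/2}=(ab)^{k}$ would require $k$ even with $k\equiv m/2\pmod m$, impossible as $m/2$ is odd.

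The only surviving configuration is $-1=b^{m/2}$ with $4\mid m$, and the main work is to rule it out using that $G$ is the actual group $\mathbb{Z}_{n}^{*}$ (abstractly, $C_{2}\times C_{m}$ does admit such an involution). In this configuration $-1=(b^{m/4})^{2}$ is a quadratic residue modulo $n$. But $-1$ is a nonsquare modulo $4$ and modulo $p^{a}$ for every prime $p\equiv 3\pmod 4$, so this forces $n=2^{\varepsilon}\prod_{i}p_{i}^{a_{i}}$ with $\varepsilon\in\{0,1\}$ and each $p_{i}\equiv 1\pmod 4$ (and, since $\phi(n)$ is even, at least one $p_{i}$ occurs). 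Then $\mathbb{Z}_{n}^{*}\cong\prod_{i}C_{\phi(p_{i}^{a_{i}})}$, and each $\phi(p_{i}^{a_{i}})=p_{i}^{a_{i}-1}(p_{i}-1)$ is divisible by $4$, so the Sylow $2$-subgroup of $\mathbb{Z}_{n}^{*}$ is a direct product of cyclic $2$-groups, each of order at least $4$; consequently its smallest invariant factor has order at least $4$. But the Sylow $2$-subgroup of $C_{2}\times C_{m}$ with $4\mid m$ is $C_{2}\times C_{2^{s}}$, where $2^{s}$ is the largest power of $2$ dividing $m$ (so $s\ge 2$), whose smallest invariant factor is $C_{2}$; this contradicts $\mathbb{Z}_{n}^{*}\cong C_{2}\times C_{m}$. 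Hence the bad configuration never occurs, a suitable $h$ always exists, and $G=\langle -1\rangle\times\langle h\rangle=\{\pm h^{i}:0\le i\le\phi(n)/2-1\}$. I expect this last step to be the crux: excluding the ``interior'' involution when $8\mid\phi(n)$ genuinely needs the arithmetic of $\mathbb{Z}_{n}^{*}$ (the Chinese remainder theorem and quadratic residues), not just its abstract isomorphism type; everything else is a short direct construction.
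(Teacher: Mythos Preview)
Your argument is correct and self-contained. The reduction to finding $h$ of order $m=\phi(n)/2$ with $-1\notin\langle h\rangle$ is the right reformulation, the case split on the three involutions of $C_{2}\times C_{m}$ is exhaustive, and the exclusion of the ``bad'' configuration $-1=b^{m/2}$ with $4\mid m$ via quadratic residues and the Sylow $2$-structure of $\mathbb{Z}_{n}^{*}$ is sound: if $-1$ is a square modulo $n$ then every odd prime divisor of $n$ is $\equiv 1\pmod 4$ and $4\nmid n$, forcing each cyclic factor of the Sylow $2$-subgroup to have order at least $4$, which is incompatible with the $C_{2}$ invariant factor coming from $C_{2}\times C_{m}$.

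As for comparison: the paper does not supply its own proof of this theorem. It is stated as a result of Lee, Kwon and Shin and cited to \cite{lee1,lee2}; the paper only uses the consequence (Corollary~\ref{c1}) for $n=2^{k}$. So there is no in-paper argument to set yours against. Your proof is more than what the paper requires---for the application to $n=2^{k}$ with $k\ge 3$ one can bypass the quadratic-residue step entirely, since $-1\equiv 7\pmod 8$ is not a square in $\mathbb{Z}_{2^{k}}^{*}$ and hence cannot lie in the index-$2$ cyclic factor once $k\ge 3$---but it does establish the general statement as written.
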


Since $\mathbb{Z}_{2^{k}}^{*} \cong \mathbb{Z}_{2} \times \mathbb{Z}_{2^{k-2}}$ for all $k\geq 3$, we can represent $\mathbb{Z}_{2^{k}}^{*} $ in terms of its semi-primitive roots as $\mathbb{Z}_{2^{k}}^{*}=\langle -1 \rangle \times  \langle h \rangle $.

We have the direct implication below from the results proven in \cite{lee1}.

\begin{corollary} \label{c1} For $k\geq 3$ and any semi-primitive root $h$ in $\mathbb{Z}_{2^{k}}^{*}$,
\[\mathbb{Z}_{2^{k}}^{*}=\{\pm h^{i} \bmod 2^{k}\ : \ i=0,\ldots,2^{k-2}-1\}.\]
\end{corollary}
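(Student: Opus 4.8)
The plan is to extract the statement from the structural facts recorded above --- $\phi(2^{k}) = 2^{k-1}$ and $\mathbb{Z}_{2^{k}}^{*}\cong\mathbb{Z}_{2}\times\mathbb{Z}_{2^{k-2}}$ for $k\ge 3$ --- together with the quoted theorem of Lee, Kwon and Shin. That theorem produces the displayed description of $\mathbb{Z}_{2^{k}}^{*}$ for \emph{one} semi-primitive root; the actual content of the corollary is that the same description holds for \emph{every} semi-primitive root $h$, and the whole argument collapses to a single claim: $-1$ is not a power of $h$.

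First I would unwind the definitions. A semi-primitive root $h$ modulo $2^{k}$ has order $\phi(2^{k})/2 = 2^{k-2}$, so the cyclic subgroup $H = \langle h\rangle$ has index $2$ in $G = \mathbb{Z}_{2^{k}}^{*}$, and hence $\lvert\langle -1\rangle\rvert\cdot\lvert H\rvert = 2\cdot 2^{k-2} = 2^{k-1} = \lvert G\rvert$. Thus the $2^{k-1}$ residues $(-1)^{s}h^{i}$ with $s\in\{0,1\}$, $0\le i\le 2^{k-2}-1$, run over all of $G$ exactly when they are pairwise distinct. A collision $(-1)^{s}h^{i}\equiv(-1)^{s'}h^{i'}$ with $(s,i)\ne(s',i')$ is impossible for $s=s'$ (since $\mathrm{ord}(h)=2^{k-2}$) and, for $s\ne s'$, is equivalent to $-1\in H$. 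So it suffices to prove $\langle -1\rangle\cap H=\{1\}$, i.e. $-1\notin H$; this simultaneously yields $G=\langle -1\rangle\times H$ and the uniqueness of the representation $x=(-1)^{s}h^{i}$.

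The main obstacle is precisely the claim $-1\notin H$: being an index-$2$ subgroup does not by itself exclude $-1$ (the non-cyclic $\langle -1\rangle\times\langle 5^{2}\rangle$ is an index-$2$ subgroup containing it), so this step genuinely uses the arithmetic of powers of odd residues modulo $2^{k}$. I would argue it via the unique element of order $2$: the cyclic group $H$ of order $2^{k-2}$ has a single element of order $2$, namely $h^{2^{k-3}}$, so $-1\in H$ would force $h^{2^{k-3}}\equiv -1\pmod{2^{k}}$. Working in the classical decomposition $G = \langle -1\rangle\times\langle 5\rangle$ (the instance of the quoted theorem with generator $5$, for which $\mathrm{ord}(5)=2^{k-2}$) and writing $h\equiv(-1)^{a}5^{b}\pmod{2^{k}}$, the order hypothesis forces $b$ odd, so for $k\ge 4$ we get $h^{2^{k-3}}\equiv(5^{2^{k-3}})^{b}$ because $a\,2^{k-3}$ is even. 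The standard congruence $5^{2^{k-3}}\equiv 1+2^{k-1}\pmod{2^{k}}$ (an easy induction on $k$), combined with $b$ odd, then gives $h^{2^{k-3}}\equiv 1+2^{k-1}\not\equiv -1\pmod{2^{k}}$, so $-1\notin H$ and the corollary follows for $k\ge 4$. The case $k=3$, where $2^{k-2}=2$ and $H=\langle h\rangle$ itself has order $2$, is degenerate and is settled by direct inspection of $\mathbb{Z}_{8}^{*}$ (here one needs $h\not\equiv -1\pmod 8$).
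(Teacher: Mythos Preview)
The paper does not give a self-contained argument for this corollary; it simply records it as a direct implication of the results in \cite{lee1}. Your write-up therefore goes well beyond the paper: you correctly isolate the single nontrivial point, namely that $-1\notin\langle h\rangle$, and you prove it for $k\ge 4$ by computing the unique order-$2$ element $h^{2^{k-3}}$ via the classical decomposition $\mathbb{Z}_{2^{k}}^{*}=\langle -1\rangle\times\langle 5\rangle$ and the standard congruence $5^{2^{k-3}}\equiv 1+2^{k-1}\pmod{2^{k}}$. That part is clean and complete, and it is essentially the same computation the paper later repackages as Lemma~\ref{l3}; you have avoided any circularity by proving it first for the fixed generator $5$.

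Your closing parenthetical about $k=3$ deserves to be promoted to an explicit caveat rather than left as a throwaway remark. With the definition adopted in the paper (order equal to $\phi(2^{k})/2$), the residue $-1\equiv 7\pmod 8$ \emph{is} a semi-primitive root modulo $8$, since every nonidentity element of $\mathbb{Z}_{8}^{*}$ has order $2$. For that choice one gets $\{\pm h^{i}:i=0,1\}=\{1,7\}\ne\mathbb{Z}_{8}^{*}$, so the corollary as stated is literally false at $k=3$, $h\equiv -1$. Your phrase ``here one needs $h\not\equiv -1\pmod 8$'' is exactly right, but you should say plainly that this is an extra hypothesis the statement omits, not merely a convenience in the verification. (The later lemmas in the paper tacitly acknowledge this by restricting to $k>3$.)
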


Nathanson in \cite[Section~3.2]{nathanson} showed that 5 is a semi-primitive root modulo $2^{k} $ for $k\geq 3$, that is, 
\[\mathbb{Z}_{2^{k}}^{*}=\{\pm 5^{i} \bmod 2^{k} \ : \ i=0,\ldots,2^{k-2}-1\}.\]

Fit-Florea and Matula \cite{fit} used the semi-primitive root 3 modulo $2^{k}$ for $k\geq3$ as the base for their discrete logarithm algorithm.  They also noted that other bases can be considered and that any residue of the form $\beta=\pm 3^{2i+1} \bmod 2^{k}, i \in\{0,1,2,\ldots,2^{k-2}-1\}$ will generate the same residues modulo $2^k$ as the residues generated by the corresponding powers of 3. The discrete logarithm for different bases can be obtained by the formula $ \dlg_{(\beta,k)}(x)=\displaystyle \frac{ \dlg_{(3,k)}(x)}{ \dlg_{(3,k)}(\beta)}$.

Because of the algebraic properties of semi-primitive roots modulo $2^{k}$, we can extend their results to find the discrete logarithm modulo $2^{k}$ using any semi-primitive root in $\mathbb{Z}_{2^{k}}^{*}$ as the logarithmic base without the need for a change of base.

Following the notation presented in \cite{szabo}, the modular function $\lvert m \rvert_{2^{k}}=j$ represents the congruence relation $m\equiv j \bmod 2^{k}$ where $k\geq 3$ and $0\leq j\leq 2^{k}-1$. The multiplicative inverse $\lvert m^{-1}\rvert_{2^{k}}$ exists for all odd $m$ with $0 < m \leq 2^{k}-1$.

Half of the odd integers modulo $2^{k}$ can be expressed as positive powers of $h$, while the other half can be expressed as negative powers of $h$.

Any $k$-bit integer $x=x_{k-1}x_{k-2}\ldots x_{1}x_{0}$  can be represented by a triple $(s,p,e)$ such that $x=\lvert(-1)^{s}2^{p}h^{e}\rvert_{2^{k}}$ with $s=\{0,1\}$, $0\leq e \leq 2^{k-2}-1$ and $0\leq p < 2^{k}$. 

Similarly to what is suggested in \cite{fit} and \cite{fit2}, the discrete logarithm factorization $x=\lvert(-1)^{s}2^{p}h^{e}\rvert_{2^{k}}$ is uniquely determined by first factoring out the largest power $2^{p}$ dividing $x$ as the even part factor and employing the discrete logarithm algorithm to the odd part factor $\lvert(-1)^{s}h^{e}\rvert_{2^{k}}$. 

Some of the following results are presented in \cite{fit} for the specific base $h=3$, and their generalizations are included in this paper.

%


\section{The Digit Inheritance Property}
\label{inheritance}
Given an integer with binary representation $x=x_{n-1}x_{n-2}\ldots x_{2}x_{1}x_{0}$ then for $1\leq k\leq n-1$,
\[\lvert x\rvert_{2^{k}}=\lvert x_{n-1}x_{n-2}\ldots x_{0}\rvert_{2^{k}}=x_{k-1}x_{k-2}\ldots x_{0},\]
that is reduction modulo $2^{k}$ is obtained by simply truncating the leading portion of the bit string.

An integer operation $z=x\otimes y$ has the \emph{Digit Inheritance Property} if for all nonnegative integers $x$ and $y$,
\[\lvert z \rvert_{2^{k}}=\mid \lvert x\rvert_{2^{k}} \otimes \lvert y\rvert_{2^{k}} \mid_{2^{k}} \mbox{ for all } k\geq 1.\]

An integer function $z=f(x)$ has the  \emph{Digit Inheritance Property} if for all nonnegative integers $x$,
\[\lvert z \rvert_{2^{k}}=\mid f(\lvert x \rvert_{2^{k}})  \mid_{2^{k}}  \mbox{ for all } k\geq 1.\]

The Digit Inheritance Property states that for operations and functions with this property, the low-order $k$ bits of the input arguments determine the low-order $k$ bits of the output for all $k\geq 1$.

Integer addition and multiplication operations and the exponentiation function satisfy the Digit Inheritance Property.

\section{Properties of the discrete logarithm modulo $2^{k}$}
 \label{props}
 
In this section, we present mathematical results that will be used to generalize the  Fit-Florea and Matula's discrete logarithm algorithm. We generalize the results presented in \cite{fit}  for any logarithmic base modulo $2^k$. This is possible because it depends on the multiplicative order of the base so that it can be adapted to any semi-primitive root.

\begin{lemma}
Let $h$ be a semi-primitive root modulo $2^{k}$, $k\geq3$. For any odd residue $A$, either $A$ or its additive inverse $-A$ is congruent to some power of $h$ modulo $2^{k}$.
 \end{lemma}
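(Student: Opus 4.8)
The plan is to invoke Corollary \ref{c1} directly. That corollary asserts that every element of $\mathbb{Z}_{2^{k}}^{*}$ can be written as $\pm h^{i} \bmod 2^{k}$ for some $i$ with $0 \leq i \leq 2^{k-2}-1$. So first I would observe that an odd residue $A$ with $0 < A < 2^{k}$ is a unit modulo $2^{k}$, hence lies in $\mathbb{Z}_{2^{k}}^{*}$; applying the corollary gives $A \equiv \varepsilon h^{i} \bmod 2^{k}$ with $\varepsilon \in \{+1,-1\}$. Then I would split into the two cases: if $\varepsilon = +1$, then $A \equiv h^{i} \bmod 2^{k}$ and $A$ itself is a power of $h$; if $\varepsilon = -1$, then $A \equiv -h^{i} \bmod 2^{k}$, so $-A \equiv h^{i} \bmod 2^{k}$ and the additive inverse is a power of $h$. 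That is exactly the dichotomy claimed.

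The only subtlety worth spelling out is why Corollary \ref{c1} applies to $h$: it requires $h$ to be a semi-primitive root modulo $2^{k}$, which is precisely the hypothesis of the lemma, and it requires $k \geq 3$, also assumed. One might also note for completeness that the two cases are mutually exclusive when $A \not\equiv \pm 1$ — since $h^{i} \equiv -h^{j} \bmod 2^{k}$ would force $-1 \in \langle h \rangle$, contradicting $\mathbb{Z}_{2^{k}}^{*} = \langle -1 \rangle \times \langle h \rangle$ being a direct product — but the lemma as stated only claims an inclusive ``either/or,'' so this refinement is optional.

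I do not anticipate a genuine obstacle here: the statement is essentially a restatement of Corollary \ref{c1} unpacked into the language of additive inverses, so the ``hard part'' is merely making sure the quantifiers and the identification of odd residues with units in $\mathbb{Z}_{2^{k}}^{*}$ are stated cleanly. If one wanted the proof to be self-contained rather than citing the corollary, the alternative would be to use the structural fact $\mathbb{Z}_{2^{k}}^{*} = \langle -1 \rangle \times \langle h \rangle$ directly: write the class of $A$ uniquely as $(-1)^{s} h^{e}$ and read off $s = 0$ or $s = 1$, which is the same argument in slightly more group-theoretic dress.
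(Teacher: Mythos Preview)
Your proposal is correct and matches the paper's approach exactly: the paper's proof is the single line ``Apply Corollary~\ref{c1},'' and you have simply spelled out that application in detail. Your additional remarks about mutual exclusivity and the alternative group-theoretic phrasing are fine but go beyond what the paper (or the lemma) requires.
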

\begin{proof}
Apply Corollary \ref{c1}.
\end{proof}

Without loss of generalization, from now on, let $A$ be an odd residue modulo $2^{k}$ that can be expressed as a positive power of a semi-primitive root $h$. 

 \begin{lemma}\label{l2}
Let $B=\lvert A^{-1} \rvert_{2^{k}}$.   Then $\dlg_{(h,k)}(A)+\dlg_{(h,k)}(B)=2^{k-2}$ for $k\geq3$.
 \end{lemma}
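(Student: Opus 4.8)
The plan is to translate the statement entirely into the language of exponents. Since $h$ is a semi-primitive root modulo $2^{k}$, by Corollary~\ref{c1} every odd residue is $\pm h^{i}$ for a unique $i$ in the range $0\leq i\leq 2^{k-2}-1$, and in particular the element $-1$ is itself an odd residue, so $-1\equiv h^{t}\bmod 2^{k}$ for some exponent $t$ in that range. The key fact I would establish first is that $t=2^{k-3}$; more precisely, because $\langle h\rangle$ is cyclic of order $2^{k-2}$ and $-1$ is the unique element of order $2$ inside the subgroup of index $2$ of $\mathbb{Z}_{2^{k}}^{*}$ (note $\mathbb{Z}_{2^{k}}^{*}\cong\mathbb{Z}_{2}\times\mathbb{Z}_{2^{k-2}}$ with $-1$ lying in neither factor's identity but... here one must be a little careful), the element $-1$ corresponds to the unique order-$2$ element of $\langle h\rangle$, namely $h^{2^{k-3}}$. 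Actually the cleanest route avoids pinning down $t$ at all: I only need that $h^{2^{k-2}}\equiv 1$ and that squaring the defining relation for $A$ kills the sign.

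Here is the core argument I would write. Suppose $A\equiv h^{a}\bmod 2^{k}$ with $a=\dlg_{(h,k)}(A)$. Then $B=\lvert A^{-1}\rvert_{2^{k}}$ satisfies $AB\equiv 1\bmod 2^{k}$, so $B\equiv h^{-a}\equiv h^{2^{k-2}-a}\bmod 2^{k}$ since $h$ has order $2^{k-2}$. Now $2^{k-2}-a$ lies in the range $0\leq 2^{k-2}-a\leq 2^{k-2}$, and I must show this value is exactly $\dlg_{(h,k)}(B)$ — i.e. that it lies in the allowed range $0\leq e\leq 2^{k-2}-1$ and genuinely realizes $B$ as a positive power. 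The only failure case is $a=0$, i.e. $A\equiv 1$, giving $2^{k-2}-a=2^{k-2}$ which is out of range and would instead be represented with exponent $0$; but then $B\equiv 1$ as well, $\dlg_{(h,k)}(B)=0$, and the claimed identity reads $0+0=2^{k-2}$, which is false. So I would check whether the lemma as stated implicitly excludes $A\equiv 1$, or whether the intended reading of $\dlg$ allows the exponent $2^{k-2}$; most likely the hypothesis ``$A$ can be expressed as a positive power of $h$'' is meant in the strict sense $a\geq 1$, and I would state that assumption explicitly. Under $1\leq a\leq 2^{k-2}-1$ we get $1\leq 2^{k-2}-a\leq 2^{k-2}-1$, so $\dlg_{(h,k)}(B)=2^{k-2}-a$, and $\dlg_{(h,k)}(A)+\dlg_{(h,k)}(B)=a+(2^{k-2}-a)=2^{k-2}$, as required.

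The one genuine subtlety — and the step I expect to be the main obstacle — is the uniqueness/well-definedness issue: I need to know that $B$ really is a positive power of $h$ (and not $-h^{j}$), so that $\dlg_{(h,k)}(B)$ refers to the same branch. This follows because the positive powers $\{h^{i}:0\leq i\leq 2^{k-2}-1\}$ form the subgroup $\langle h\rangle$, which is closed under inversion; hence $A\in\langle h\rangle$ forces $B=A^{-1}\in\langle h\rangle$ automatically, with no sign ambiguity. I would make this the pivotal sentence of the proof. The rest is the order-$2^{k-2}$ bookkeeping above together with the Digit Inheritance Property (implicitly invoked to be sure the congruences are consistent across the fixed modulus $2^{k}$), and the verification that the exponent arithmetic stays inside the canonical range $[0,2^{k-2}-1]$ once $A\not\equiv 1$.
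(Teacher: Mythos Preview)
Your core argument (second paragraph) is exactly the paper's proof: with $a=\dlg_{(h,k)}(A)$ and $b=\dlg_{(h,k)}(B)$ one has $h^{a+b}\equiv AB\equiv 1\equiv h^{2^{k-2}}$, hence $a+b=2^{k-2}$. The paper stops there; your additional care about the range of $2^{k-2}-a$ and the observation that $B\in\langle h\rangle$ by closure under inversion are welcome refinements, and you are right that the case $A\equiv 1$ (giving $0+0\neq 2^{k-2}$) is a genuine boundary issue the paper silently ignores.

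One correction to your exploratory first paragraph: the claim that $-1\equiv h^{t}\bmod 2^{k}$ for some $t$ is false for $k\geq 3$. The decomposition $\mathbb{Z}_{2^{k}}^{*}=\langle -1\rangle\times\langle h\rangle$ is an internal direct product, so $-1\notin\langle h\rangle$; the unique element of order $2$ in $\langle h\rangle$ is $h^{2^{k-3}}\equiv 2^{k-1}+1$ (this is Lemma~\ref{l3}), not $-1$. You sensed the trouble and abandoned that line, so it does not affect your proof, but the assertion as written should be deleted.
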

\begin{proof}
If $a=\dlg_{(h,k)}(A)$ and $b=\dlg_{(h,k)}(B)$, then $\lvert A \rvert_{2^{k}}=\lvert h^{a}\rvert_{2^{k}}$ and $\lvert B\rvert _{2^{k}}=\lvert h^{b}\rvert_{2^{k}}$. Because $h$ is a semi-primitive root modulo $2^{k}$, we have that $\lvert h^{2^{k-2}} \rvert_{2^{k}}= 1$.

Since $\lvert AB \rvert_{2^{k}}= 1$, then $\lvert AB \rvert_{2^{k}}= \lvert h^{2^{k-2}}\rvert_{2^{k}} $. We also have that $\lvert h^{a}h^{b}\rvert_{2^{k}}=  \lvert h^{a+b} \rvert_{2^{k}}$. It follows that $a+b=2^{k-2}$.
\end{proof}

If the discrete logarithm mod $2^{k}$ is known for $B$, which is the multiplicative inverse of $A$, then we can compute 
$\dlg_{(h,k)}(A)=2^{k-2}-\dlg_{(h,k)}(B)$.

We can describe the multiplicative inverse of $\lvert(-1)^{s} h^{e}\rvert_{2^{k}}$  as $\lvert (-1)^{s} h^{2^{k-2}-e}\rvert_{2^{k}}$.

\begin{lemma} \label{l3}
For $k>3$ and $h$ any semi-primitive root mod $2^{k}$, we have that $\lvert h^{2^{k-3}}\rvert_{2^{k}}=\lvert 2^{k-1}+1\rvert_{2^{k}}$.
 \end{lemma}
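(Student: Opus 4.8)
The plan is to observe that $h^{2^{k-3}}$ is forced to be a square root of unity modulo $2^{k}$, and then to identify which of the four such roots it must be. Since $h$ is a semi-primitive root, its multiplicative order modulo $2^{k}$ is $\phi(2^{k})/2 = 2^{k-2}$, so $\bigl(h^{2^{k-3}}\bigr)^{2} = h^{2^{k-2}} \equiv 1 \pmod{2^{k}}$. First I would record the standard description of $\{x : x^{2}\equiv 1 \bmod 2^{k}\}$: for odd $x$ one has $\gcd(x-1,x+1)=2$, so $2^{k}\mid (x-1)(x+1)$ forces $2^{k-1}$ to divide one of the two factors, i.e.\ $x\equiv \pm 1 \pmod{2^{k-1}}$; hence $\lvert h^{2^{k-3}}\rvert_{2^{k}}\in\{1,\ 2^{k}-1,\ 2^{k-1}-1,\ 2^{k-1}+1\}$.

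The remaining work is to eliminate the three unwanted candidates. The value $1$ is impossible because $0<2^{k-3}<2^{k-2}$ and $2^{k-2}$ is the exact order of $h$. To rule out $2^{k}-1\equiv -1$ and $2^{k-1}-1$, I would reduce modulo $8$: since $k>3$, the exponent $2^{k-3}$ is even, so $h^{2^{k-3}}$ is a square of an odd integer and therefore $\equiv 1 \pmod 8$; on the other hand $-1\equiv 7\pmod 8$ and, because $k-1\geq 3$, also $2^{k-1}-1\equiv 7\pmod 8$, so neither can equal $h^{2^{k-3}}$. By elimination $\lvert h^{2^{k-3}}\rvert_{2^{k}} = \lvert 2^{k-1}+1\rvert_{2^{k}}$, which is consistent since $2^{k-1}+1\equiv 1\pmod 8$.

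I do not expect a genuine obstacle; the only delicate point is the elimination step and, within it, the role of the hypothesis $k>3$. That hypothesis is used exactly twice: to guarantee that $2^{k-3}$ is even (so the mod-$8$ argument applies), and to guarantee $2^{k-1}-1\equiv 7\pmod 8$. The restriction is necessary: for $k=3$ the statement fails, since $h=3$ is a semi-primitive root modulo $8$ yet $h^{2^{0}}=3\neq 5 = 2^{2}+1$. Thus any correct proof must invoke $k\geq 4$ somewhere, and in this approach it enters precisely through the reduction modulo $8$.
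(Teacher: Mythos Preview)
Your argument is correct and proceeds by a different route from the paper. The paper works from the target element: it sets $A=2^{k-1}+1$, checks directly that $A^{2}\equiv 1\pmod{2^{k}}$ so that $A=A^{-1}$, and then applies Lemma~\ref{l2} (the formula $\dlg_{(h,k)}(A)+\dlg_{(h,k)}(A^{-1})=2^{k-2}$) to conclude $\dlg_{(h,k)}(A)=2^{k-3}$. You instead start from $h^{2^{k-3}}$, list the four solutions of $x^{2}\equiv 1\pmod{2^{k}}$, and eliminate three of them via the order of $h$ and a reduction modulo~$8$. Your approach is self-contained (no appeal to Lemma~\ref{l2}) and is in fact tighter on one point: the paper's computation, read literally, yields only $\dlg_{(h,k)}(2^{k-1}+1)=2^{k-3}$, i.e.\ $2^{k-1}+1\equiv \pm h^{2^{k-3}}\pmod{2^{k}}$, and does not separately argue that the sign is~$+$; your mod-$8$ step is precisely what excludes the candidate $2^{k-1}-1\equiv -(2^{k-1}+1)$, and it also makes explicit where the hypothesis $k>3$ is used.
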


\begin{proof}
Let $A= \lvert 2^{k-1}+1\rvert_{2^{k}}$. We have that $A=A^{-1}$ since $\lvert A^{2}\rvert_{2^{k}} \equiv \lvert (2^{k-1}+1)^{2}\rvert_{2^{k}}\equiv \lvert 2^{2(k-1)}+2\cdot 2^{k-1}+1 \rvert_{2^{k}}\equiv \lvert 1 \rvert_{2^{k}}$.

From Lemma \ref{l2}, $\dlg_{(h,k)}(A)+\dlg_{(h,k)}(A^{-1})=2^{k-2} \Longrightarrow 2\cdot \dlg_{(h,k)}(A)=2^{k-2}$. Therefore, $\dlg_{(h,k)}(A)=2^{k-3}$.
\end{proof}

\begin{corollary}
$\dlg_{(h,k)}(2^{k-1}+1)=2^{k-3}$.
\end{corollary}


\begin{corollary}[Digit Inheritance of the Discrete Logarithm]
 The low-order $(i-2)$ bits of the discrete logarithm function $\dlg_{(h,k)}(x)$ depends only on the low-order $i$ bits of the argument $x$ for $3\leq i \leq k$.
\end{corollary}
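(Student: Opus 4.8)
The plan is to establish the sharper identity
\[
\dlg_{(h,k)}(x)\bmod 2^{i-2}\;=\;\dlg_{(h,i)}\bigl(\lvert x\rvert_{2^{i}}\bigr)
\qquad (3\le i\le k,\ x\text{ odd}),
\]
whose right-hand side depends only on $\lvert x\rvert_{2^{i}}$, that is, on the low-order $i$ bits of $x$; this is precisely the assertion of the corollary. It suffices to treat odd $x$, the case relevant to the discrete-logarithm algorithm; for even $x$ one first divides out the largest power of $2$, since $\dlg_{(h,k)}$ depends only on the odd cofactor. The argument rests on two facts: (i) $h$ is again a semi-primitive root after reduction, so $\operatorname{ord}_{2^{i}}(h)=2^{i-2}$ for all $3\le i\le k$; and (ii) reduction modulo $2^{i}$ carries the canonical discrete-logarithm triple modulo $2^{k}$ to the canonical one modulo $2^{i}$.

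For (i) I would note that $\operatorname{ord}_{2^{i}}(h)$ is a power of $2$ (since $\mathbb{Z}_{2^{i}}^{*}$ is a $2$-group) and is at most $2^{i-2}$, the exponent of $\mathbb{Z}_{2^{i}}^{*}$; if it were strictly smaller we would have $\lvert h^{2^{i-3}}\rvert_{2^{i}}=1$, and squaring this congruence repeatedly --- each squaring doubles the exponent of $h$ and raises the modulus by one power of $2$ --- would yield $\lvert h^{2^{k-3}}\rvert_{2^{k}}=1$, contradicting Lemma~\ref{l3}; the case $k=3$ is immediate since then $i=3$ and $\dlg$ takes values in $\{0,1\}$. (One could instead quote the classification that an odd integer is a semi-primitive root modulo $2^{n}$ exactly when it is $\equiv 3$ or $5\pmod 8$, a condition not involving $n\ge 3$.)

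For (ii), by Corollary~\ref{c1} we may assume $\lvert x\rvert_{2^{k}}=\lvert h^{e}\rvert_{2^{k}}$ with $e=\dlg_{(h,k)}(x)\in\{0,\dots,2^{k-2}-1\}$; the case of a negative power is handled identically, carrying the sign $s$ along. Reducing modulo $2^{i}$ and using (i), $\lvert x\rvert_{2^{i}}=\lvert h^{e}\rvert_{2^{i}}=\lvert h^{e'}\rvert_{2^{i}}$ with $e'=e\bmod 2^{i-2}\in\{0,\dots,2^{i-2}-1\}$. It remains to verify that $e'$ is genuinely $\dlg_{(h,i)}(\lvert x\rvert_{2^{i}})$: it lies in the canonical range, the sign is unchanged because a positive power of $h$ reduces to a positive power, and there is no ambiguity because $-1\notin\langle h\rangle$ in $\mathbb{Z}_{2^{i}}^{*}$ (otherwise the index-$2$ subgroup $\langle h\rangle$ would be all of $\mathbb{Z}_{2^{i}}^{*}$, forcing $\operatorname{ord}_{2^{i}}(h)=2^{i-1}$, contrary to (i)). This gives the displayed identity, and with it the corollary.

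I expect the main obstacle to be the bookkeeping in step (ii) rather than any computation: one must check that reduction yields the \emph{canonical} triple $(s,p,e)$ modulo $2^{i}$ --- correct sign, correct exponent range, and uniqueness --- and not merely some valid representation of $\lvert x\rvert_{2^{i}}$. Fact (i) is routine once Lemma~\ref{l3} is available, and the rest is just the Digit Inheritance Property of the exponentiation function specialized to this situation.
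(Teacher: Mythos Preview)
Your argument is correct. The paper actually states this corollary without proof, placing it immediately after Lemma~\ref{l3} and before Lemma~\ref{l4}; the intended justification is the one you supply: Lemma~\ref{l3} (applied at each level, or equivalently the lifting argument you give) forces $\operatorname{ord}_{2^{i}}(h)=2^{i-2}$ for every $3\le i\le k$, so reducing $x=\lvert(-1)^{s}h^{e}\rvert_{2^{k}}$ modulo $2^{i}$ yields $\lvert(-1)^{s}h^{e\bmod 2^{i-2}}\rvert_{2^{i}}$, and the uniqueness of the $(s,e')$ pair follows from $-1\notin\langle h\rangle$ in $\mathbb{Z}_{2^{i}}^{*}$. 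Your identity $\dlg_{(h,k)}(x)\bmod 2^{i-2}=\dlg_{(h,i)}(\lvert x\rvert_{2^{i}})$ is exactly what the paper exploits operationally in Lemma~\ref{l4} (the case $i=k-1$) and in the bit-serial loop of Algorithm~\ref{dlg}, so your proof is fully aligned with the paper's framework---it simply makes explicit what the paper leaves implicit.
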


%

We can apply Lemma \ref{l3} to compute the discrete logarithm modulo $2^{i}$ of residues $(2^{i-1}+1)\bmod 2^{i}$ for any $i$. 


\begin{lemma}\label{l4} 
\emph{For any $k>3$,}
\[
\begin{array}{ccc}
\dlg_{(h,k)}(A)  & =  & \dlg_{(h,k-1)}(A)  \\
  & \mbox{or}  &   \\
\dlg_{(h,k)}(A)  & =  & \dlg_{(h,k-1)}(A) +2^{k-3}
\end{array}
\]

\end{lemma}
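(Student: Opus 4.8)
The plan is to exploit the Digit Inheritance Property of the exponentiation function together with the fact that reducing modulo $2^{k-1}$ simply truncates one leading bit. Write $a = \dlg_{(h,k)}(A)$, so that $\lvert h^{a}\rvert_{2^{k}} = \lvert A\rvert_{2^{k}}$. Reducing this congruence modulo $2^{k-1}$ gives $\lvert h^{a}\rvert_{2^{k-1}} = \lvert A\rvert_{2^{k-1}}$, which shows that $a$ is \emph{an} exponent realizing $A$ modulo $2^{k-1}$; but the discrete logarithm $\dlg_{(h,k-1)}(A)$ is by definition the \emph{minimal} such exponent, and it lies in the range $0 \le \dlg_{(h,k-1)}(A) \le 2^{k-3}-1$. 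Since $h$ has order $2^{k-3}$ in $\mathbb{Z}_{2^{k-1}}^{*}$, any two exponents realizing $A$ modulo $2^{k-1}$ differ by a multiple of $2^{k-3}$; hence $a \equiv \dlg_{(h,k-1)}(A) \pmod{2^{k-3}}$.

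Next I would pin down which multiple of $2^{k-3}$ occurs. Since $0 \le a \le 2^{k-2}-1 = 2\cdot 2^{k-3} - 1$ and $0 \le \dlg_{(h,k-1)}(A) \le 2^{k-3}-1$, the congruence $a \equiv \dlg_{(h,k-1)}(A) \pmod{2^{k-3}}$ leaves exactly two possibilities: either $a = \dlg_{(h,k-1)}(A)$, or $a = \dlg_{(h,k-1)}(A) + 2^{k-3}$. This is precisely the dichotomy asserted in the lemma. Equivalently, in terms of the Digit Inheritance of the Discrete Logarithm corollary, the low-order $(k-3)$ bits of $a$ are already determined by the low-order $(k-1)$ bits of $A$ and agree with $\dlg_{(h,k-1)}(A)$; only the single new high-order bit of $a$ (worth $2^{k-3}$) is undetermined at this stage.

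The one point requiring a little care is the claim that $h$ really has order exactly $2^{k-3}$ in $\mathbb{Z}_{2^{k-1}}^{*}$ — i.e. that a semi-primitive root modulo $2^{k}$ restricts to a semi-primitive root modulo $2^{k-1}$. This follows because $h$ is odd, hence a unit modulo $2^{k-1}$, and Lemma \ref{l3} (applied with $k-1$ in place of $k$, valid since $k>3$ means $k-1 \ge 3$; the boundary case $k-1=3$ can be checked directly) shows $\lvert h^{2^{k-4}}\rvert_{2^{k-1}} = \lvert 2^{k-2}+1\rvert_{2^{k-1}} \neq 1$, while $\lvert h^{2^{k-3}}\rvert_{2^{k-1}} = \lvert h^{2^{k-2}}\rvert_{2^{k-1}}^{1/2}$-type reasoning — more cleanly, $\lvert h^{2^{k-3}}\rvert_{2^{k-1}}$ is the reduction modulo $2^{k-1}$ of $\lvert h^{2^{k-3}}\rvert_{2^{k}}$, which squares to $1$; combined with the order being a power of $2$ dividing $\phi(2^{k-1})/1 = 2^{k-2}$ and not dividing $2^{k-4}$, the order is forced to be $2^{k-3}$. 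I expect this order bookkeeping to be the only genuine obstacle; once it is in place, the rest is the elementary range argument above.
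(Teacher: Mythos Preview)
Your overall strategy is sound and takes a different route from the paper's. The paper argues at the bit level: it sets $a'=\dlg_{(h,k-1)}(A)$, observes that $h^{a}$ and $h^{a'}$ agree modulo $2^{k-1}$, and then splits on whether the bit of weight $2^{k-1}$ in $h^{a'}$ matches that of $A$. If not, one has $h^{a}\equiv h^{a'}+2^{k-1}\equiv h^{a'}(2^{k-1}+1)\pmod{2^{k}}$, and Lemma~\ref{l3} (at level $k$) turns $2^{k-1}+1$ into $h^{2^{k-3}}$, giving $a=a'+2^{k-3}$. This is more constructive: it identifies \emph{which} alternative occurs by inspecting a single bit, and that is exactly the mechanism the algorithm exploits. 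Your order-plus-range argument is cleaner conceptually but does not by itself say which case you are in.

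There is, however, a circularity in your last paragraph. To show the order of $h$ modulo $2^{k-1}$ does not divide $2^{k-4}$, you invoke Lemma~\ref{l3} with $k-1$ in place of $k$; but the hypothesis of Lemma~\ref{l3} is that $h$ be a semi-primitive root modulo that power of $2$, so applying it at level $k-1$ already presupposes that $h$ has order $2^{k-3}$ there---precisely what you are trying to establish. Your upper bound on the order is fine (Lemma~\ref{l3} at level $k$ gives $h^{2^{k-3}}\equiv 2^{k-1}+1\equiv 1\pmod{2^{k-1}}$), but the lower bound needs a non-circular argument. The standard one works: if $h^{2^{k-4}}\equiv 1\pmod{2^{k-1}}$, write $h^{2^{k-4}}=1+m\cdot 2^{k-1}$ and square to obtain $h^{2^{k-3}}=1+m\cdot 2^{k}+m^{2}\cdot 2^{2k-2}\equiv 1\pmod{2^{k}}$, contradicting that $h$ has order $2^{k-2}$ modulo $2^{k}$. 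With this repair your proof goes through. (The paper's proof also uses $\dlg_{(h,k-1)}(A)$ and hence implicitly relies on the same fact, but does not isolate or prove it.)
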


\begin{proof}
Let $a^{\prime}=\dlg_{(h,k-1)}(A) $ and $a=\dlg_{(h,k)}(A) $. So $ \lvert  h^{a^{\prime}}\rvert_{2^{k-1}} =  \lvert A \rvert_{2^{k-1}} $ and  $\lvert h^{a}\rvert_{2^{k}} = \lvert A\rvert_{2^{k}}  $

Because of the Digit Inheritance Property, $h^{a}$ and $h^{a^{\prime}}$ have the same digits whose binary weights are $2^{k-2}, \ldots, 2^{1}, 2^{0}$.

If their digits with weight $2^{k-1}$ are the same then $\lvert h^{a} \rvert_{2^{k}}=\lvert h^{a^{\prime}}\rvert_{2^{k}}$ . Therefore, $a=a^{\prime}$.

If not the same, we must have that $\lvert h^{a^{\prime}}+2^{k-1}\rvert_{2^{k}}=\lvert h^{a}\rvert_{2^{k}}$. 

Since $\lvert h^{a^{\prime}}\times 2^{k-1}\rvert_{2^{k}}=\lvert 2^{k-1}\rvert_{2^{k}}$, we have that

\begin{equation}\label{eq1}
\lvert h^{a^{\prime}}(2^{k-1}+1)\rvert_{2^{k}}=\lvert h^{a^{\prime}}2^{k-1}+h^{a^{\prime}}\rvert_{2^{k}}=\lvert 2^{k-1}+h^{a^{\prime}}\rvert_{2^{k}}=\lvert h^{a}\rvert_{2^{k}}.
\end{equation}

Applying Lemma \ref{l3},

\begin{equation} \label{eq2}
\lvert h^{a^{\prime}}(2^{k-1}+1)\rvert_{2^{k}}=\lvert h^{a^{\prime}}h^{2^{k-3}}\rvert_{2^{k}}=\lvert h^{a^{\prime}+2^{k-3}}\rvert_{2^{k}}
\end{equation}

Comparing (\ref{eq1}) and (\ref{eq2}),  $\lvert h^{a}\rvert_{2^{k}}=\lvert h^{a^{\prime}+2^{k-3}}\rvert_{2^{k}}$. Therefore, $a=a^{\prime}+2^{k-3}$.
\end{proof}

This result allows for the computation of $\dlg_{(h,k)}(A)$ one bit at a time.

\section{The Digit-Serial Discrete Logarithm Algorithm}
\label{algorithm}

The following results distinguish the positive powers of $h$ from the negative ones.

\begin{lemma}\label{l5} 
Let $A$ be an odd positive integer with $\lvert A \rvert_{2^{k}}=a_{k-1}a_{k-2}\ldots a_{2}a_{1}1$ then:
\begin{itemize}
\item[i)] \emph{If $A \equiv 1 \mod 4$ then $a_{1}=0$.}
\item[ii)] \emph{If $A \equiv 3 \mod 4$ then $a_{1}=1$.
}\end{itemize}
\end{lemma}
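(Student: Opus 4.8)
The plan is to reduce everything to the two lowest-order bits of $A$. By the truncation remark at the start of Section~\ref{inheritance} (reduction modulo $2^{j}$ is obtained by discarding all bits of weight $\geq 2^{j}$), the value $\lvert A\rvert_{4}$ is determined precisely by the bit string $a_{1}a_{0}$; explicitly $\lvert A\rvert_{4} = 2a_{1} + a_{0}$. Since $A$ is odd we have $a_{0}=1$, so $\lvert A\rvert_{4} = 2a_{1}+1$, and because $a_{1}\in\{0,1\}$ this forces $\lvert A\rvert_{4}\in\{1,3\}$, with $\lvert A\rvert_{4}=1$ exactly when $a_{1}=0$ and $\lvert A\rvert_{4}=3$ exactly when $a_{1}=1$.

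From here both cases are immediate. First I would invoke the hypothesis $k\geq 3$ (implicit throughout this section), so that the digits $a_{1}$ and $a_{0}$ are genuinely present in $\lvert A\rvert_{2^{k}}$, and also note that $\lvert A\rvert_{4} = \lvert\, \lvert A\rvert_{2^{k}}\,\rvert_{4} = A \bmod 4$, so the congruence class of $A$ modulo $4$ agrees with that of its $k$-bit truncation. For part (i), if $A\equiv 1\pmod 4$ then $2a_{1}+1\equiv 1\pmod 4$, whence $2a_{1}\equiv 0\pmod 4$ and therefore $a_{1}=0$. For part (ii), if $A\equiv 3\pmod 4$ then $2a_{1}+1\equiv 3\pmod 4$, whence $2a_{1}\equiv 2\pmod 4$ and therefore $a_{1}=1$.

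There is essentially no hard step here: the only thing to be careful about is the bookkeeping that identifies the integer value of the two-bit suffix $a_{1}a_{0}$ with $A \bmod 4$, which is exactly the truncation/digit-inheritance observation already recorded in Section~\ref{inheritance}. I would therefore keep the write-up to a few lines, citing that observation rather than re-deriving it.
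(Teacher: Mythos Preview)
Your argument is correct. It differs in mechanics from the paper's proof, though both are elementary. The paper proceeds ``forward'': it writes $A=1+4q$ (respectively $A=3+4q$) and then invokes the Digit Inheritance Property of addition and multiplication to compute the entire $k$-bit string of $\lvert A\rvert_{2^{k}}$ as $q_{k-3}\ldots q_{0}01$ (respectively $q_{k-3}\ldots q_{0}11$), from which $a_{1}$ is read off. You proceed ``backward'': you invoke only the truncation remark to identify $\lvert A\rvert_{4}$ with the two-bit suffix $a_{1}a_{0}=2a_{1}+1$, and then solve for $a_{1}$. Your route is shorter and uses strictly less machinery (no need for the inheritance of $+$ and $\times$, only truncation), while the paper's route incidentally yields the full bit expansion of $A$ in terms of $q$, information that is not needed for the lemma itself.
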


\begin{proof}
i) If $A \equiv 1 \mod 4$ then $A=1 + 4q$ for some integer $q\geq 0$. Because of the Digit Inheritance Properties of the addition and multiplication modulo $2^{k}$, 
\begin{align} 
\lvert A \rvert_{2^{k}} &=\lvert 1\rvert_{2^{k}} +\lvert \lvert 4\rvert_{2^{k}} \cdot \lvert q\rvert_{2^{k}}\rvert_{2^{k}}\nonumber \\
a_{k-1}a_{k-2}\ldots a_{2}a_{1}1 &=  0 \ldots 0001 +  \lvert 0 \ldots 0100 \times q_{k-1}q_{k-2} \ldots q_{1}q_{0}\rvert_{2^{k}}\nonumber \\
 &=  0 \ldots 0001+ q_{k-3}q_{k-4} \ldots q_{1}q_{0}00\nonumber \\
  &=  q_{k-3}q_{k-4} \ldots q_{1}q_{0}01
 \end{align} Thus, $a_{1}=0$.

ii) Similar can be shown for the case $A \equiv 3 \mod 4$ since $\lvert A\rvert_{2^{k}} = \lvert 3\rvert_{2^{k}} +\lvert \lvert 4\rvert_{2^{k}} \cdot \lvert q \rvert_{2^{k}}\rvert _{2^{k}}$. It follows from $\lvert 3\rvert_{2^{k}}=0 \ldots 0011$  that $\lvert A\rvert_{2^{k}}= a_{k-3}a_{k-4} \ldots a_{2}a_{1}a_{0}11$.
Therefore, $a_{1}=1$.
\end{proof}

%

\begin{theorem}\label{main theorem}
Let $h$ be a semi-primitive root mod $2^{k}$, $k\geq 3$. For all positive powers of $h$ with bit string $\lvert A\rvert_{2^{k}}=a_{k-1}a_{k-2}\ldots a_{2}a_{1}1$ we have that:
\begin{itemize}
\item[i)] If $h \equiv 1 \mod 4$ then $a_{1}=0$.
\item[ii)] If $h \equiv 3 \mod 4$ then $a_{2}=0$.
\end{itemize}
\end{theorem}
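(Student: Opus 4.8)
The plan is to treat parts (i) and (ii) separately; in each case I would reduce the claim to a congruence on $A$ modulo a small power of $2$ and then read off the relevant bit, using Lemma~\ref{l5} and the Digit Inheritance Property.

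Part (i) is almost immediate. Write $A=\lvert h^{e}\rvert_{2^{k}}$ for some $e\ge 0$. Since the exponentiation function has the Digit Inheritance Property, the residue $\lvert h^{e}\rvert_{4}$ depends only on $\lvert h\rvert_{4}$; as $h\equiv 1\bmod 4$ this gives $\lvert A\rvert_{4}=\lvert 1^{e}\rvert_{4}=1$, i.e.\ $A\equiv 1\bmod 4$. Lemma~\ref{l5}(i) then yields $a_{1}=0$. (Note that part (i) does not even use that $h$ is a semi-primitive root, only that $h\equiv 1\bmod 4$.)

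Part (ii) is where the work lies. The point is that $h\equiv 3\bmod 4$ by itself is too weak: modulo $8$ we only know $h\equiv 3$ or $h\equiv 7$, and if $h\equiv 7\bmod 8$ then the powers of $h$ run through $\{1,7\}\bmod 8$, and $7=(\dots 111)_{2}$ has weight-$4$ bit equal to $1$ — so the statement would be false for $A=h$. Hence the key step is to show that a semi-primitive root $h$ with $h\equiv 3\bmod 4$ actually satisfies $h\equiv 3\bmod 8$. I would argue from the structure $\mathbb{Z}_{2^{k}}^{*}=\langle-1\rangle\times\langle 5\rangle$ with $\operatorname{ord}(5)=2^{k-2}$: every element is $\pm 5^{j}$, and it has order $2^{k-2}$ exactly when $j$ is odd, so the semi-primitive roots are $\{\pm 5^{j}:j\text{ odd}\}$; since $5^{2}\equiv 1\bmod 8$ we have $5^{j}\equiv 5\bmod 8$ for odd $j$, so every semi-primitive root is $\equiv 5$ or $\equiv 3\bmod 8$, and those congruent to $3\bmod 4$ are exactly those congruent to $3\bmod 8$. (Alternatively and more directly: if $h\equiv 7\equiv -1\bmod 8$ then $-h\equiv 1\bmod 8$ lies in the kernel of $\mathbb{Z}_{2^{k}}^{*}\twoheadrightarrow\mathbb{Z}_{8}^{*}$, a subgroup of order $2^{k-3}$, so $\operatorname{ord}(-h)\mid 2^{k-3}$ and hence $\operatorname{ord}(h)\mid\operatorname{lcm}(2,2^{k-3})=2^{k-3}<2^{k-2}$, contradicting semi-primitivity.)

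Once $h\equiv 3\bmod 8$, the argument parallels part (i): any odd integer squared is $\equiv 1\bmod 8$, so $h^{e}\equiv h^{e\bmod 2}\bmod 8$, whence $\lvert A\rvert_{8}=\lvert h^{e}\rvert_{8}\in\{1,3\}$ according to the parity of $e$; both $1=(\dots 001)_{2}$ and $3=(\dots 011)_{2}$ have weight-$4$ bit equal to $0$, and by the Digit Inheritance Property the bit $a_{2}$ of $A$ equals this bit, so $a_{2}=0$. The main obstacle is thus the lemma that a semi-primitive root $\equiv 3\bmod 4$ is $\equiv 3\bmod 8$; a minor point is the base case $k=3$, where $7$ satisfies the order definition of a semi-primitive root yet equals $-1$ and makes Corollary~\ref{c1} degenerate — there one restricts to the roots $\{3,5\}$ realizing $\mathbb{Z}_{8}^{*}=\langle-1\rangle\times\langle h\rangle$ and checks the claim directly. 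Everything else is bit-level bookkeeping.
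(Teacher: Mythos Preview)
Your approach is the same as the paper's in outline: reduce $A=h^{e}$ modulo a small power of $2$ and read off the relevant bit, with part (i) handled exactly as the paper does via $h^{e}\equiv 1\pmod 4$ and Lemma~\ref{l5}.

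For part (ii), however, you are more careful than the paper, and rightly so. The paper's proof only establishes that $h^{2i}\equiv 1$ and $h^{2i+1}\equiv 3$ modulo $4$ and then asserts that ``both $1$ and $3$ have binary representation with $a_{2}=0$''; but a residue modulo $4$ determines only $a_{0},a_{1}$, not $a_{2}$, so as written this step does not go through. Your additional lemma---that a semi-primitive root with $h\equiv 3\pmod 4$ must satisfy $h\equiv 3\pmod 8$, since $h\equiv 7\equiv -1\pmod 8$ would force $\operatorname{ord}(h)\mid 2^{k-3}$---is exactly what is needed, and once it is in place the mod-$8$ argument you give (powers land in $\{1,3\}$ mod $8$, both with $a_{2}=0$) is the intended conclusion. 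The paper records the equivalent fact $\lvert h\rvert_{2^{3}}\in\{011,101\}$ only \emph{after} the theorem, in the run-up to the algorithm, so your proof effectively closes a gap in the paper's presentation. Your remark about the $k=3$ edge case ($h=7$ has order $2=\phi(8)/2$ yet violates the conclusion) is also well taken; the paper is implicitly restricting to those $h$ for which $\mathbb{Z}_{2^{k}}^{*}=\langle -1\rangle\times\langle h\rangle$, which excludes $h=7$ at $k=3$.
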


\begin{proof}
i) If  $h\equiv 1 \mod 4$ then $h^{i}\equiv 1 \mod 4$ for any $i\in \mathbb{N}$. By lemma (\ref{l5}), we must have $a_{1}=0$.

ii) If $h\equiv 3 \mod 4$ then $h^{2}\equiv 1 \mod 4$. So the powers of $h$ are of the form $h^{2 i}\equiv 1 \mod 4$ or $h^{2 i +1}\equiv 3 \mod 4$ for all $i\in \mathbb{N}$. Both 1 and 3 have binary representation with $a_{2}=0$.
\end{proof}


\begin{observation}
Let $\lvert h\rvert_{2^{k}}=h_{k-1}h_{k-2}\ldots h_{2}h_{1}1$ be a semi-primitive root modulo $2^{k}$. It follows from Lemma (\ref{l5}) and Theorem (\ref{main theorem}) that                 
for all positive powers of $h$ modulo $2^{k}$ whose binary representation is $\lvert A\rvert_{2^{k}}=a_{k-1}\ldots a_{2}a_{1}1$, 
$ \mbox{if } h_{1}+1= \begin{cases} 1 & \mbox{ then } a_{1}=0\\
                                       2 & \mbox{ then } a_{2}=0 \end{cases} $ .
                                      \end{observation}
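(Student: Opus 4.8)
The plan is to derive this observation directly from the two results it cites, Lemma \ref{l5} and Theorem \ref{main theorem}, by a simple case split on the bit $h_1$ of the semi-primitive root $h$. The key point is that $h_1$ is precisely the bit of binary weight $2^1$ in $\lvert h \rvert_{2^k}$, and since $h$ is odd its bit $h_0$ equals $1$; hence $h \bmod 4 = 2h_1 + 1$, which is $1$ when $h_1 = 0$ and $3$ when $h_1 = 1$. So the quantity $h_1 + 1$ distinguishes exactly these two cases: $h_1 + 1 = 1 \iff h \equiv 1 \pmod 4$ and $h_1 + 1 = 2 \iff h \equiv 3 \pmod 4$.

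First I would observe that every semi-primitive root modulo $2^k$ is odd (it lies in $\mathbb{Z}_{2^k}^*$), so writing $\lvert h \rvert_{2^k} = h_{k-1}\ldots h_2 h_1 1$ is legitimate and $h \bmod 4$ is determined solely by $h_1$ as noted above. Then I would invoke Theorem \ref{main theorem}: in case $h \equiv 1 \pmod 4$, i.e. $h_1 + 1 = 1$, part (i) gives $a_1 = 0$ for every positive power $A$ of $h$; in case $h \equiv 3 \pmod 4$, i.e. $h_1 + 1 = 2$, part (ii) gives $a_2 = 0$ for every such $A$. (The reference to Lemma \ref{l5} in the observation is because Theorem \ref{main theorem} itself is proved via Lemma \ref{l5}, so the observation is really just a repackaging of the theorem's conclusion indexed by the value of $h_1 + 1$.) Assembling these two cases into the displayed piecewise statement completes the argument.

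There is essentially no obstacle here: the content is entirely contained in Theorem \ref{main theorem}, and the only thing being added is the bookkeeping remark that the hypothesis ``$h \equiv 1$ or $3 \pmod 4$'' is read off from the single bit $h_1$, equivalently from whether $h_1 + 1$ equals $1$ or $2$. The one thing worth stating carefully is the elementary identity $h \bmod 4 = 2h_1 + 1$ for odd $h$, so that the correspondence between the two branches is transparent; everything else is a direct citation.

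\begin{proof}
Since $h$ is a semi-primitive root modulo $2^k$, it is an element of $\mathbb{Z}_{2^k}^*$ and hence odd, so its low-order bit is $h_0 = 1$ and we may write $\lvert h \rvert_{2^k} = h_{k-1}\ldots h_2 h_1 1$. The residue of $h$ modulo $4$ is determined by its two low-order bits, namely $h \equiv 2h_1 + 1 \pmod 4$. Thus $h_1 = 0$ is equivalent to $h \equiv 1 \pmod 4$ (that is, $h_1 + 1 = 1$), and $h_1 = 1$ is equivalent to $h \equiv 3 \pmod 4$ (that is, $h_1 + 1 = 2$).

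Let $A$ be any positive power of $h$ modulo $2^k$ with binary representation $\lvert A \rvert_{2^k} = a_{k-1}\ldots a_2 a_1 1$. If $h_1 + 1 = 1$, then $h \equiv 1 \pmod 4$, and part (i) of Theorem \ref{main theorem} (which rests on Lemma \ref{l5}) yields $a_1 = 0$. If $h_1 + 1 = 2$, then $h \equiv 3 \pmod 4$, and part (ii) of Theorem \ref{main theorem} yields $a_2 = 0$. These two cases are exactly the two branches of the asserted piecewise statement, so the observation follows.
\end{proof}
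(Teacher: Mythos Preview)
Your proposal is correct and matches the paper's own treatment: the paper does not give a separate proof of the observation but simply states that it follows from Lemma~\ref{l5} and Theorem~\ref{main theorem}, and your argument makes explicit exactly that deduction via the elementary identity $h \bmod 4 = 2h_1 + 1$ for odd $h$.
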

 

 The bit strings representing the negative powers of $h$ are two's complements of the positive powers. In the case where $A$ is a negative power of $h$, if $h\equiv 1 \mod 4$  then $a_{1}=1$, and if $h\equiv 3 \mod 4$ then $a_{2}=1$.

This was noted in \cite{fit} for the semi-primitive root 3 where positive powers of $3 \bmod 2^{k}$ have binary digit $a_{2}=0$, while the negative powers of $3 \bmod 2^{k}$ have binary digit $a_{2}=1$.

In summary, one can identify if an integer $A$ modulo $2^{k}$ is a negative power of $h$ by checking if it satisfies one of the following conditions: 

i) $h_{1}=0$  and $a_{1}=1$

ii) $h_{1}=1$  and $a_{2}=1$

 \vspace{2mm}

Because of the Digit Inheritance Property for the residues modulo $2^{k}$, $\lvert h\rvert_{2^{k}}=h_{k-1}h_{k-2}\ldots h_{2}h_{1}1$ have the the same least significant digits as $\lvert h\rvert_{2^{3}}=h_{2}h_{1}1$. Since $\lvert h^{0}\rvert_{2^{3}}=001$ and $\lvert -h^{0}\rvert_{2^{3}}=111$ for any $h$,  we have that either $\lvert h\rvert_{2^{3}}=011$ or $\lvert h\rvert_{2^{3}}=101$.
We check the binary digits of $\lvert h\rvert_{2^{3}}$ to see which of the bits between  $h_{1}$ and $h_{2}$ is zero. From this, we can determine if an odd integer $A$  for which we want to find $\dlg_{(h,k)}(A)$ is a positive or negative power of $h \bmod 2^{k}$ for any $k\geq 3$. 

For the Algorithm \ref{dlg} below, $\lvert B\rvert_{2^{k}}=\lvert A^{-1}\rvert_{2^{k}}$ and $b=\dlg_{(h,k)}(B)$.

\begin{algorithm}[H]
	\caption{Generalized Fit-Florea and Matula DLG Digit-Serial Algorithm}\label{dlg}
\begin{algorithmic}[1]
\Require Odd integer $\lvert A\rvert_{2^{k}}=a_{k-1}a_{k-2}\ldots a_{2}a_{1}1$ \ \ \ \ \ \ \ \ \ \ \ \break Semi-primitive root $\lvert h\rvert_{2^{k}}=h_{k-1}h_{k-2}\ldots h_{2}h_{1}1$ 
\Ensure The factorization $(s,e)$ of $A$ as $A= \lvert (-1)^{s}h^{e}\rvert_{2^k}$
\Statex
 	\State $B:=1$;  \label{comment1}
	\State $b:=0$;   \label{comment2} 
	\If{($h_{1}=0$ and $a_{1}=1$) or ($h_{1}=1$ and $a_{2}=1$)}   \label{comment3}
		\State $P:=\lvert -A\rvert_{2^{k}}$;  \label{comment4}
		\State $s:=1$;
	\Else
		\State  $P:=\lvert A\rvert_{2^{k}}$;
		\State $s:=0$; 
	\EndIf\label{ifloop1}
	\If{$\lvert P\rvert_{2^{3}}=\lvert h\rvert_{2^{3}}$} \label{comment5}
		\State $B:=\lvert h\rvert_{2^{k}}$;
		\State $b:=1$;
	\EndIf\label{ifloop2}
	\State $P:=\lvert P\times B\rvert_{2^{k}}$: \label{comment6}
	\For{$i$ from 3 to $k-1$}\label{comment7}
		\If{$p_{i}=1$}
			\State $b:=b+2^{i-2}$;  \label{comment8}
			\State $B:=\lvert B\times h^{2^{i-2}}\rvert_{2^{k}}$;
			\State $P:=\lvert P\times h^{2^{i-2}}\rvert_{2^{k}}$;  \label{comment9}
		\EndIf\label{ifloop3}
	\EndFor\label{forloop1}
\State $e=\lvert 2^{k-2}-b\rvert_{2^{k-2}}$
\State \textbf{return} $(s,e)$ 

\end{algorithmic}
\end{algorithm}

We note the following about algorithm \ref{dlg}:

\begin{itemize}
\item Lines \ref{comment1} and \ref{comment2}  correspond to the binary representation of 1 with $k$ bits and the binary representation of 0 with $k-2$ bits, respectively.
\item The condition in line \ref{comment3} identifies the case where $A$ is a negative power of $h$.
\item Line \ref{comment4} represents the two's complement of $A$.
\item There are only two possibilities for $P$, either $\lvert P\rvert_{2^{3}}=001$ or $\lvert P\rvert_{2^{3}}=\lvert h\rvert_{2^{3}}$, which is what being tested in line \ref{comment5}.
\item Line \ref{comment6} performs the binary multiplication of $k$-digit numbers.
\item Line \ref{comment8}  flips the $(i-1)$-th bit of $b$.
\item The loop from line \ref{comment7} to line \ref{comment9} employs $O(k)$ dependent multiplications modulo $2^{k}$.
\end{itemize}

\section{Conclusion}
We presented a generalized version of Fit-Florea and Matula's algorithm. The algorithm finds the discrete logarithm modulo $2^{k}$ whose base is a semi-primitive root. With the present results, the discrete logarithm can be computed directly for bases other than 3, and the algorithm remains efficient, requiring $O(k)$ multiplications modulo $2^{k}$.

%
%

%
%
%
%
%


\end{document}